\documentclass[12pt]{amsart}
 \textwidth=14.5cm
 \hoffset=-1cm

\usepackage{amsmath}
\usepackage{amsfonts}
 \textheight=21cm       
\usepackage{amssymb}
\usepackage{graphicx}


\newtheorem{theo}{Theorem}[section]
\theoremstyle{plain}

\newtheorem{cor}[theo]{Corollary}

\newtheorem{example}[theo]{Example}

\newtheorem{lemma}[theo]{Lemma}

\newtheorem{proposition}[theo]{Proposition}
\newtheorem{remark}[theo]{Remark}

\numberwithin{equation}{section}
                    
\begin{document}
\title[Cliquishness and Quasicontinuity of  Two Variables Maps]{Cliquishness and Quasicontinuity 
of  Two
Variables Maps}
\author{A. Bouziad}
\address[]
{D\'epartement de Math\'ematiques,  Universit\'e de Rouen, UMR CNRS 6085, 
Avenue de l'Universit\'e, BP.12, F76801 Saint-\'Etienne-du-Rouvray, France.}
\email{ahmed.bouziad@univ-rouen.fr}
\subjclass[2000]{54C05(54C08, 54B10),91A05}
 
\keywords{cliquisness, fragmentability, joint continuity, point-picking game, quasicontinuity, separate continuity, two variables maps} 
\begin{abstract} We
study the existence of continuity points for   mappings 
$f: X\times Y\to Z$  whose  $x$-sections $Y\ni y\to f(x,y)\in  Z$ are
fragmentable  and  $y$-sections $X\ni x\to f(x,y)\in Z$ are 
quasicontinuous, where   $X$ is a Baire space and   $Z$
 is a metric space. For the factor $Y$, we consider two 
 infinite ``point-picking'' games $G_1(y)$ and  $G_2(y)$ defined respectively
for each $y\in Y$  as follows: In the $n$th
inning,  Player {\bf I} gives a dense  set  $D_n\subset Y$, respectively, a dense open set $D_n\subset Y$, then 
Player {\bf II} picks a point $y_n\in D_n$;
{\bf II} wins if $y$ is in the closure of ${\{y_n:n\in\mathbb N\}}$, otherwise
{\bf I} wins. It is shown that
(i) $f$ is
cliquish 
 if {\bf II} has a winning strategy in $G_1(y)$ for every $y\in Y$,  and (ii)  $
f$ is quasicontinuous  if 
  the $x$-sections of  $f$ are continuous   and the set of $y\in Y$
 such that {\bf II} has a winning strategy in $G_2(y)$ is dense in $Y$. Item (i) extends substantially
 a result of Debs (1986) and item (ii)  indicates that 
  the problem of Talagrand (1985) on separately continuous maps has a positive answer for a wide
class of ``small'' compact spaces. 
  
\end{abstract}
\maketitle
\section{introduction} Let $X$ be a topological space and $(Z,d)$
be a metric space. A mapping 
$f:X \to Z$ 
is said to be cliquish \cite{Th} if for any $\varepsilon >0$ and any nonempty open set
$U\subset X$, there is a nonempty open set $O\subset U$ such that
$d(f(x),f(y))<\varepsilon$ for all $x,y\in O$. Following \cite{Ko} (see also \cite{JOPV}), the
mapping $f$ is said to be fragmentable if the restriction of $f$ to each
nonempty subspace of $X$ is cliquish. Fragmentable mappings are said to be of 
the first  class in Debs's paper \cite{D}; given the common  meaning of
``first class functions'', we will adopt
here Koumoullis's
terminology. Recall also that the mapping $f$ is said to be
quasicontinuous \cite{Ke} if for every $\varepsilon>0$, every $x\in X$ and every
neighborhood $V$ of $x$ in $X$, there is a nonempty open set $O\subset V$ such
that $d(f(x),f(y))<\varepsilon$ for each $y\in O$. It is well known (and easily
seen) that  quasicontinuous mappings  are cliquish, and cliquish mappings are
continuous at every point of a residual subset of $X$ (and vice versa if $X$
is a Baire space).\par 
There are in the literature a lot  of studies, dating back at least to Baire
\cite{Ba}, whose purpose is to find conditions (as weak as possible) to insure
the existence of continuity points for mappings of two variables.
Among them there is  the following by Fudali \cite{F}
(1983): Every mapping $f:X\times Y\to Z$, where $X$ is Baire, $Y$ is second 
countable and $Z$ is a metric space,
such that for every $(x,y)\in X\times Y$ the $x$-section $f_x: Y\ni
y\to f(x,y)\in Z$ is cliquish and  the $y$-section $f^y:
X\ni x\to f(x,y)\in Z$ is  quasicontinuous, is a cliquish mapping. 
 See also
\cite{E} for similar results and \cite{N,P} for closely related results
involving quasicontinuous
$x$-sections. There are easy examples showing that  Fudali's
result 
is false for metrizable $Y$ (an example is included here); therefore, 
the following result established  by   Debs in \cite{D} (1986)
is of particular interest: Every mapping $f:X\times Y\to Z$
whose $x$-sections  are fragmentable and   $y$-sections are continuous  is cliquish, provided 
that   $X$ is  a ``special''
Baire space,
$X\times Y$ is a Baire space and $Y$ is   first  countable.  
The interested reader is referred to
\cite{D} 
for the precise assumption on $X$.  \par
In this note 
 Debs's theorem is    improved  
as follows (Corollary 3.5): If the $y$-sections of $f$ are quasicontinuous, the
$x$-sections are fragmentable,
$X$ is Baire 
and the $\pi$-character of each point
of $Y$ is countable, then $f$ is cliquish. 
This  statement is  a special case of one of the  main  results of this paper
(Theorems 3.4 and 3.7) where the problem is considered 
under some fairly general  conditions expressed in terms of
two
  point-picking games played on the factor $Y$ (defined in the next section).
The second main result concerns the mappings $f$ whose $x$-sections are
continuous and
 $y$-section are quasicontinuous; it states, in particular, that such a mapping is quasicontinuous
provided that the space $Y$ has densely many points of countable
$\pi$-character (Corollary 3.8).   \par 
The topic here is closely related to the following problem by
Talagrand \cite{T} (1985): Let $f:X \times Y\to\mathbb R$ be a separately
continuous mapping, where $X$ is a Baire space
and $Y$ is a compact space; is it true that $f$ admits 
 at least a continuity point in $X\times Y$?  The reader is referred
to \cite{BM} for further information about this still-open question. According to Corollary 3.8, we have
 a positive answer if  densely many points of  $Y$ (or $X$) are of
countable $\pi$-character. In view of  the theorem by 
 Juh\'asz and Shelah \cite{JS}, that is, $\pi_\chi(y,Y)\leq t(y,Y)$ for every $y\in
Y$, the answer is also 
  positive  if the compact $Y$  admits  a dense
set of points of countable tightness. The definitions of the cardinal
numbers $\pi_\chi(y,Y)$ and $t(y,Y)$ are recalled below.

\section{Two games}
  
 Let $Y$ be a topological space and 
$\mathcal L$ be a collection of nonempty
subsets
of $Y$. For  
 $y\in Y$, we consider
the following two persons infinite point-picking game 
$G({\mathcal L},y)$ on $Y$. Player I begins and
gives
  $L_0\in\mathcal L $, then Player II chooses a point
$y_0\in L_0$; at stage
$n\geq 1$, Player I chooses   $L_n\in\mathcal L $
and then
Player II gives a point $y_n\in L_{n}$.  A play $(L_n,y_n)_{n\in\mathbb
N}$ is won by Player I   if $y\in\overline{\{y_n:n\in\mathbb
N\}}$; otherwise, II wins.  \par
We will be concerned  in this game  with   two different
collections 
$\mathcal L$ of subsets of $Y$,  namely, the collection ${\mathcal O}(Y)$ of
nonempty open
subsets of $Y$ and the collection ${\mathcal A}(Y)$ of somewhere dense subsets
of $Y$. (When the space $Y$ is clearly identified from the context,  we shall
simply write  ${\mathcal O}$ and $\mathcal A$.) Recall that a subset
$F\subset Y$ is said to be somewhere dense in $Y$ if the interior of its
closure ${\rm Int}({\overline F})$ in $Y$ is
nonempty. It should be mentioned that if  $\mathcal L$ is the collection of all neighborhoods
of $y$ in $Y$, then $G({\mathcal L}, y)$ is  the game introduced by 
Gruenhage
in \cite{G}. The game $G({\mathcal O},y)$ is 
the pointwise version of   the one introduced by Berner and  Juh\'asz
in their paper \cite{BJ} (from which the term ``picking-point
game''
is taken):  In the $n$th step, Player I gives a nonempty open set $U_n\subset Y$, then
II picks a point $y_n\in U_n$; I wins if $\{y_n:n\in\mathbb N\}$ is dense in
$Y$.  \par 

Following the terminology of \cite{S2}, the {\it dual game} $G^*({\mathcal
A},y)$ of $G({\mathcal A},y)$ (respectively, $G^*({\mathcal
O},y)$  of $G({\mathcal O},y)$) on $Y$
is defined
as follows: At stage $n$, Player I gives a dense open set $D_n\subset
Y$ (respectively, a dense  set $D_n\subset Y$) and then Player II chooses
$y_n\in D_n$. Player
II wins if $y\in\overline{\{y_n:n\in\mathbb N\}}$. Using
the  techniques of \cite{S1}
one can show that these games are indeed dual, meaning  that Player II has
a
winning strategy  in the game $G^*({\mathcal A},y)$ (respectively, Player I
has a winning strategy in $G^*({\mathcal A},y)$) if and only if Player
I  has a winning strategy  in $G({\mathcal
A},y)$ (respectively, Player II
has a winning strategy in $G({\mathcal A},y)$). This is also
true for the class ${\mathcal O}$.  \par 
 The next  statement  and the discussion after its proof show that the
difference between the games 
 $G({\mathcal O},y)$ and $G({\mathcal A},y)$ is significant.

\begin{proposition}  Let $y\in Y$ and ${\mathcal N}$ be a collection
of  subsets of $Y$ such that
\begin{itemize}
\item[{\rm (i)}] for every neighborhood $U$ of $y$ in $Y$ there is a finite
collection ${\mathcal F}\subset\mathcal N$ such that ${\rm Int}(\cap{\mathcal
F})\not=\emptyset$
and $\cap{\mathcal F}\subset U$;
\item[{\rm (ii)}] 
the closure of the set $A=\{z\in Y:|\{N\in {\mathcal N}:z\not\in N\}|\leq\aleph_0\}$ is a neighborhood of
$y$ in  
$Y$.
\end{itemize}
Then  Player I
has a winning strategy in the game $G({\mathcal A},y)$.
\end{proposition}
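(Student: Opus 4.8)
The plan is to reformulate ``Player I wins'' into a purely combinatorial clearing condition, and then build a strategy that forces every pick into $A$ while diagonally clearing countably many ``constraints'' revealed during play. Write $W=\mathrm{Int}(\overline A)$, which by (ii) is a nonempty open neighborhood of $y$ in which $A$ is dense. For $z\in A$ put $\mathcal B(z)=\{N\in\mathcal N: z\notin N\}$; by definition of $A$ this is countable, and note the elementary fact that $z\in\bigcap\mathcal F$ for \emph{every} finite $\mathcal F\subseteq\mathcal N$ disjoint from $\mathcal B(z)$. The first step is to reduce the goal: I claim that if the picks $y_n$ satisfy, for every finite $\mathcal F\subseteq\mathcal N$ with $\emptyset\neq\mathrm{Int}(\bigcap\mathcal F)$ and $\bigcap\mathcal F\subseteq W$, that some $y_n\in\bigcap\mathcal F$, then $y\in\overline{\{y_n:n\in\mathbb N\}}$ and Player I wins. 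Indeed, given any neighborhood $V$ of $y$, the set $V\cap W$ is a neighborhood of $y$ contained in $W$, so (i) supplies a finite $\mathcal F\subseteq\mathcal N$ with $\emptyset\neq\mathrm{Int}(\bigcap\mathcal F)$ and $\bigcap\mathcal F\subseteq V\cap W\subseteq W$; the corresponding pick then lies in $\bigcap\mathcal F\subseteq V$.

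Next I would record the two facts that drive the strategy. First, Player I can force picks into $A$ together with any prescribed finite intersection: whenever $\mathcal G\subseteq\mathcal N$ is finite with $\mathrm{Int}(\bigcap\mathcal G)\cap W\neq\emptyset$, the set $(\bigcap\mathcal G)\cap A$ is somewhere dense (since $A$ is dense in the nonempty open set $\mathrm{Int}(\bigcap\mathcal G)\cap W$, its closure contains that open set), so presenting it forces $y_n\in(\bigcap\mathcal G)\cap A$. Second, the \emph{trace reduction}: once all picks are in $A$, set $\mathcal B=\bigcup_n\mathcal B(y_n)$, a \emph{countable} subfamily of $\mathcal N$. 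For a target $\mathcal F$ as above, if $\mathcal F\cap\mathcal B=\emptyset$ then every single pick already lies in $\bigcap\mathcal F$; and in general, forcing one pick $y_m\in\bigcap(\mathcal F\cap\mathcal B)\cap A$ automatically gives $y_m\in\bigcap\mathcal F$, because $\mathcal B(y_m)\subseteq\mathcal B$ yields $\mathcal F\cap\mathcal B(y_m)=(\mathcal F\cap\mathcal B)\cap\mathcal B(y_m)=\emptyset$. Thus clearing the target $\mathcal F$ reduces to clearing its trace $\mathcal F\cap\mathcal B$, a finite subset of the countable set $\mathcal B$.

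With these in hand the strategy is as follows. Player I always presents a somewhere dense subset of $A$, so all picks lie in $A$; for $L_0$, and on any ``idle'' round, she simply plays $A$. She maintains a diagonal schedule over $[\mathcal B]^{<\omega}$: fixing a bijection $g\colon\mathbb N\to\mathbb N\times\mathbb N$ with first coordinate of $g(m)$ strictly below $m$, she uses round $m$ to process one finite subfamily $\mathcal G$ of the already-revealed part $\mathcal B(y_0)\cup\cdots\cup\mathcal B(y_{m-1})$; if that $\mathcal G$ satisfies $\mathrm{Int}(\bigcap\mathcal G)\cap W\neq\emptyset$ she \emph{clears} it by presenting $(\bigcap\mathcal G)\cap A$, forcing $y_m\in\bigcap\mathcal G$. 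Because any finite $\mathcal G\subseteq\mathcal B$ is contained in $\bigcup_{n\le k}\mathcal B(y_n)$ for some $k$, the schedule reaches every such $\mathcal G$ at some round $>k$. Consequently every relevant target $\mathcal F$ (with $\emptyset\neq\mathrm{Int}(\bigcap\mathcal F)$, $\bigcap\mathcal F\subseteq W$) gets a pick in $\bigcap\mathcal F$: its trace $\mathcal G=\mathcal F\cap\mathcal B$ has $\mathrm{Int}(\bigcap\mathcal G)\supseteq\mathrm{Int}(\bigcap\mathcal F)$ meeting $W$, so $\mathcal G$ is eventually cleared and the resulting pick lies in $\bigcap\mathcal F$ by the trace reduction. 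By the first paragraph this gives $y\in\overline{\{y_n\}}$.

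The main obstacle is precisely the bookkeeping in the third paragraph: a priori there may be uncountably many targets $\mathcal F$ to clear (already in the model space $\{0,1\}^\kappa$, where $\mathcal N$ consists of the clopen sets $\{z:z_\alpha=0\}$, the targets correspond to the uncountably many finite subsets of $\kappa$), while Player I has only countably many rounds. The conceptual crux that defeats this is the combination of (ii) and the countability of each $\mathcal B(z)$: forcing every pick into $A$ makes the total revealed family $\mathcal B$ countable, and the trace reduction collapses the clearing of all targets to the clearing of the countably many finite subsets of $\mathcal B$, which a single diagonal schedule can exhaust. I would therefore spend the care of the write-up on (a) the somewhere-density verifications that license each presented set, and (b) checking that the schedule processes each trace strictly after the round in which it first appears, since clearing moves themselves enlarge $\mathcal B$.
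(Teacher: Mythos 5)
Your proposal is correct and follows essentially the same route as the paper's proof: your families $\mathcal B(y_n)$ are the paper's $\mathcal N_n$, your diagonal schedule $g$ with first coordinate below $m$ is the paper's pairing $(\phi,\psi)$ with $n>\phi(n)$, your trace $\mathcal F\cap\mathcal B$ is the paper's $\mathcal F_1=\mathcal F\cap(\bigcup_n\mathcal N_n)$, and the fallback to playing $A$ on idle rounds matches the paper's default move. The only differences are expository (you isolate the reduction to targets inside $W=\mathrm{Int}(\overline A)$ and the trace reduction as explicit lemmas), not mathematical.
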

\begin{proof}  Let us 
fix a bijective map ${\mathbb N}\ni n\to (\phi(n),\psi(n))\in {\mathbb N}\times \mathbb N$
such that $n>\phi(n)$ for every $n\geq 1$. Put $\tau_y(\emptyset)=A$. For $y_0\in A$
(i.e., the answer  of
Player II) let ${\mathcal S}_0=\{S_n^0:n\in\mathbb N\}$ be an enumeration of the collection  of all
sets of the form $\cap\mathcal F$,
where $\mathcal F$ is
a finite
subcollection  of ${\mathcal N}_0=\{N\in{\mathcal N}: y_0\not\in N\}$  (we adopt
the convention $\cap\emptyset=Y$). Define
$\tau_y(y_0)=S_0^0\cap A$ if $S_0^0\cap A\in\mathcal A$
and $\tau_y(y_0)=A$ otherwise.  At stage $n\geq 1$, to define $\tau_y(y_0,\ldots,y_n)$ 
put ${\mathcal N}_{n}=\{N\in{\mathcal N}:y_{n}\not\in N\}$ and
denote by ${\mathcal S}_{n}=\{S_{k}^{n}:k\in\mathbb N\}$  the collection of all intersections $\cap\mathcal F$, where $\mathcal F$
is a finite subcollection of  $\cup_{i\leq n}{\mathcal N}_i$. Then,  put
$\tau_y(y_0,\ldots,y_{n})=A\cap S_{\psi(n)}^{\phi(n)}$ if $A\cap S_{\psi(n)}^{\phi(n)}\in\mathcal A$
and $\tau_y(y_0,\ldots,y_{n})=A$ otherwise. The definition of $\tau_y$ is
complete.\par
To show that $\tau_y$ is a winning strategy,  let $(y_n)_{n\in\mathbb N}\subset Y$ be a
play which is compatible with $\tau_y$ and let 
  $U\subset \overline A$ be a neighborhood
 of $y$ in $Y$. There is a finite set ${\mathcal F}\subset \mathcal N$ such that
$\cap{\mathcal F}\subset U$ and ${\rm Int}(\cap{\mathcal F})\not=\emptyset$.
 Let ${\mathcal F}_1={\mathcal F}\cap(\cup_{n\in\mathbb N}{\mathcal N}_n)$. 
 We assume that ${\mathcal F}_1\not=\emptyset$ (otherwise,
$\{y_n:n\in\mathbb N\}\subset U$). Put
 $S=\cap{\mathcal F}_1$ and choose  $n\in\mathbb N$
such that $S=S_{\psi(n)}^{\phi(n)}$;  since
  $\emptyset\not={\rm Int}(\cap{\mathcal F})\subset \overline
A$, the set
 $S\cap A$ belongs to $\mathcal A$. It follows that $y_{n+1}\in S$, hence $y_{n+1}\in U$ since
  $y_{n+1}\in N$ for every $N\in{\mathcal F}\setminus{\mathcal F}_1$.
 \end{proof} 
Recall that a network at $y$ in $Y$ is a collection $\mathcal N$ of 
subsets of $Y$ such that   every
neighborhood  of $y$ in $Y$ contains some nonempty  member of $\mathcal N$. A $\pi$-base at $y$ in $Y$
is a network at $y$, all members of which are open.
The space $Y$ is said to have a countable $\pi$-character at $y\in Y$, in
symbol $\pi_\chi(y,Y)\leq\aleph_0$, if $y$ has a countable $\pi$-base in $Y$.\par
Proposition 2.1 applies in the case $\pi_\chi(y,Y)\leq \aleph_0$ as well as in many other cases. To illustrate this, let us consider for a cardinal number $\kappa$ the Cantor cube $2^\kappa$
of weight $\kappa$. It is well known that  $\pi_\chi(y,2^\kappa)=\kappa$ for
every $y\in2^\kappa$ (see \cite{H});
since $2^\kappa$ is a regular space, it follows that if $\kappa$ is
uncountable then 
${\mathcal A}(2^\kappa)$ does not include any countable $\pi$-network at any point
of $2^\kappa$. Also, if $\kappa$ is uncountable, then Player II
has a winning strategy in the game $G({\mathcal O},y)$ for every $y\in 2^\kappa$:
It suffices to confront Player II in the dual game $G^*({\mathcal O},y)$ to
the dense subset of $2^\kappa$ given by $\Sigma({\overline y})=\{z\in
2^\kappa:|\{i\in \kappa: z(i)\not={\overline y}(i)\}|\leq\aleph_0\}$, where
${\overline y}=1-y$.  However, Player
I has always a winning strategy in the games $G({\mathcal A},y)$, $y\in
2^\kappa$. Indeed, for
$y\in 2^\kappa$, the collection ${\mathcal N}=\{\{z\in 2^\kappa:z(i)=y(i)\}:i\in\kappa\}$
satisfies the assumption of Proposition 2.1. Observe also that Player I has a winning strategy
in the games $G({\mathcal O},y)$ played on the dense subspace $\Sigma(0)$ of
$2^\kappa$, for every $y\in\Sigma(0)$ \cite{G}.
\par

 Clearly, if Player I has a winning strategy in the game $G({\mathcal L},y)$ on
$Y$,
then the 
collection $\mathcal L$ is a $\pi$-network at $y$ in $Y$. (Of course, this  holds  even if Player II does not 
have a winning strategy in this game.)
The next lemma, needed below,
gives us a bit more. Let $Y^{<\omega}$ stand for the set
of finite sequences in $Y$.
\begin{lemma} Suppose that   Player I has a winning strategy
$\tau$ in the game $G({\mathcal L},y)$.
Then, for every neighborhood $V$ of $y$ in $Y$, Player I has a winning strategy
$\sigma$  in the game $G({\mathcal L},y)$ such that $\sigma\subset V$, that
is, $\sigma(s)\subset V$
for every $s\in Y^{<\omega}$.
\end{lemma}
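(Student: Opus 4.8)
The plan is to prove the lemma by showing that from a winning strategy $\tau$ for Player I that ignores the neighborhood constraint, one can build a strategy $\sigma$ whose outputs are always contained in $V$, while preserving the winning property. The starting observation is that since $\tau$ is winning, the collection $\mathcal L$ is a $\pi$-network at $y$ (as noted just before the lemma); in particular there is at least one member $L \in \mathcal L$ with $L \subset V$. More usefully, after \emph{any} finite sequence of plays, Player I using $\tau$ produces a set $\tau(s) \in \mathcal L$, and the key point I would exploit is that the family of legal moves contained in $V$ is itself rich enough: I want $\sigma$ to simulate a play of $\tau$ but to replace each move $\tau(s)$ by a move inside $V$.

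The natural construction is as follows. Given a position $s=(y_0,\dots,y_{n-1})\in Y^{<\omega}$ consisting of Player II's responses, I would define $\sigma(s)$ by running $\tau$ on the same sequence $s$ to obtain $L=\tau(s)\in\mathcal L$, and then I would like to set $\sigma(s)$ to be some member of $\mathcal L$ contained in $L\cap V$ (shrinking into the neighborhood). The crucial step is to argue that such a member exists whenever it is needed. Here the winning property of $\tau$ must be converted into a local statement: I would argue that if for some position $L\cap V$ contained no member of $\mathcal L$, then Player II could exploit this to defeat $\tau$, contradicting that $\tau$ is winning. Concretely, I expect to build, by a tree/diagonal argument, a $\tau$-compatible play all of whose moves avoid $V$ (hence avoid $y$), so that $y\notin\overline{\{y_n\}}$ and Player II wins against $\tau$ — the desired contradiction. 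This is the mechanism that forces the moves of $\tau$ (after suitable positions) to be refinable inside $V$.

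The main obstacle I anticipate is precisely making this ``shrinking into $V$'' rigorous without assuming $\mathcal L$ is closed under taking subsets or under intersection with open sets: the collections $\mathcal O$ and $\mathcal A$ in play do \emph{not} obviously have a member inside an arbitrary $L\cap V$. The way around this is to \emph{not} shrink each individual move, but instead to have $\sigma$ ignore the portion of the $\tau$-play that strays outside $V$. That is, I would let $\sigma$ follow $\tau$ but, whenever $\tau$ would hand Player II a set meeting the complement of $V$, I reinterpret Player II's forced-into-$V$ response as a legal continuation and feed a ``dummy'' point (one lying in $\overline V$ or in $V$) back into $\tau$ to keep the simulation consistent. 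The bookkeeping ensuring that the reinterpreted play is genuinely $\tau$-compatible, and that a win for $\tau$ in the simulation translates back to a win for $\sigma$ in the real game with all moves inside $V$, is the delicate part.

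Once the simulation is set up, the verification that $\sigma$ is winning is short: take any $\sigma$-compatible play $(L_n,y_n)$ with all $\sigma(s)\subset V$; by construction it corresponds to a $\tau$-compatible play, which $\tau$ wins, so $y\in\overline{\{y_n:n\in\mathbb N\}}$, and since every $y_n\in\sigma(\cdot)\subset V$ the containment $\sigma\subset V$ holds by definition. I would organize the write-up so that the bijective pairing/diagonalization trick used in the proof of Proposition 2.1 (to schedule all finitely generated subfamilies) is reused here to interleave, across innings, the requirement that $\sigma$ stay inside $V$ with the faithful simulation of $\tau$; this makes the final argument essentially a relabeling rather than a new combinatorial construction.
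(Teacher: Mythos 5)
Your proposal correctly identifies the central difficulty (one cannot simply replace $\tau(s)$ by a member of $\mathcal L$ inside $\tau(s)\cap V$, since $\mathcal L$ need not be closed under shrinking) and it correctly identifies the one fact that the winning property of $\tau$ actually yields: a $\tau$-compatible play all of whose points avoid $V$ would be lost by Player I, which is impossible. However, the construction you sketch around this fact does not work, for two reasons. First, the ``local statement'' you propose in your second paragraph --- that $\tau(s)\cap V$ must contain a member of $\mathcal L$ --- is simply false: $\tau(s)$ may be disjoint from $V$ (nothing forces an individual move of a winning strategy to come anywhere near $y$), and Player II cannot ``exploit'' this to defeat $\tau$, since the winning condition constrains only the closure of the whole infinite play. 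Second, and more seriously, in your simulation you feed back into $\tau$ dummy points lying \emph{in} $V$ (or $\overline V$). The correct choice is the opposite: whenever $\tau$ produces a set not contained in $V$, Player I should privately continue the $\tau$-play by points chosen \emph{outside} $V$. This is essential three times over: (i) such points exist precisely because the set is not contained in $V$; (ii) the insertion process must terminate after finitely many steps with $\tau$ outputting a set contained in $V$ --- for otherwise one obtains a $\tau$-compatible play avoiding $V$, the contradiction you named --- whereas with dummy points inside $V$ there is no contradiction and no reason for termination; (iii) in the final verification one interleaves the real play with the auxiliary blocks to get a $\tau$-compatible play, and one needs every point of that combined play that falls into a small neighborhood $W\subset V$ of $y$ to be a genuine move of the $\sigma$-play, which is exactly what ``auxiliary points avoid $V$'' guarantees. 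With your placement of the dummies, the win for $\tau$ in the simulated play could be witnessed entirely by dummy points and would not transfer to $\sigma$.

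This is what the paper's proof does: for each position $s$ with $\tau(s)\not\subset V$ it fixes a finite $\tau$-compatible extension $t_s$ consisting of points outside $V$ with $\tau(s,t_s)\subset V$, sets $\sigma(s)=\tau(s,t_s)$, and in the end shows that the real play interleaved with the blocks $t_{s_n}$ is $\tau$-compatible, so that some term lands in $W$ and, not being an auxiliary term, is one of the $y_n$. Your write-up stops short of this: the ``delicate bookkeeping'' you defer is precisely the content of the lemma, and the one concrete design decision you do commit to (dummies in $V$) is the wrong one. So there is a genuine gap.
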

\begin{proof} Fix some $L_0\in\mathcal L$ such that $L_0\subset V$. For
every
finite sequence $s=(y_0,\ldots,y_n)\in Y^{<\omega}$ such that
$y\in\overline{\{y_0,\ldots,y_n\}}$ (no separation axiom is assumed), put $\sigma(s)=L_0$. For the remaining
sequences in $Y^{<\omega}$, including the empty sequence (that is, the first 
move of Player I), we proceed as follows.  Let
$s\in Y^{k}$ be such a sequence ($k\in\mathbb N$). If
$\tau(s)\subset V$, put $\sigma(s)=\tau(s)$ and $t_s=\emptyset$.
If not, write $s=(y_0,\ldots,y_k)$ (if $s\not=\emptyset$) and choose a finite
sequence $t_s=(x_0^{s},\ldots,x_{n_{s}}^s)\in
Y^{n_s}$ such that the sequence
$(s,t_s)$ is compatible with $\tau$, $\{x_0^{s},\ldots,x_{n_{s}}^s\}\cap V=\emptyset$ and
$\tau(s,t_s)\subset V\setminus\overline{\{y_0,\ldots,y_k\}}$
(or $\tau(s,t_s)\subset V$ if $s=\emptyset$); such a
sequence
exists since $\tau$ is a winning strategy.
Then define $\sigma(s)=\tau(s,t_s)$. The definition of $\sigma$ is complete.\par
Let $(y_n)_{n\in\mathbb N}\subset Y$ be a sequence which is compatible with
$\sigma$ and let $W\subset V$ be a neighborhood of $y$
in $Y$. We may suppose that $y\not\in\cup_{n\in\mathbb N}\overline{\{y_0,\ldots,y_n\}}$.
Put $s_n=(y_0,\ldots,y_n)$ ($n\in\mathbb
N$); then, the sequence $(z_n)_{n\in\mathbb N}$ starting with $t_\emptyset$
and obtained 
from $(y_n)_{n\in\mathbb N}$ by inserting 
each  $t_{s_n}$
just after $y_n$,  is compatible with $\tau$. Hence there
is $p\in\mathbb N$ such that $z_p\in W$;  since   no term of the
sequences
$t_\emptyset$ and $t_{s_n}$ ($n\in\mathbb N$)  belongs to $V$, $z_p\in\{y_n:n\in\mathbb
N\}$. 
  \end{proof}
\section{Main results}
The main results rest on the
following proposition. In its proof we shall make use of the description
of first category sets in term of the Banach-Mazur game. For
 a space $X$ and  $R\subset  X$, a play in the game  $BM(R)$ (on $X$) 
is a sequence  $(V_n,U_n)_{n\in\mathbb N}$ of pairs of nonempty open subsets
of $X$ produced  alternately by two players $\beta$
and $\alpha$ as  follows:  $\beta$ is the first to move and gives $V_0$, then
Player $\alpha$ gives $U_0\subset V_0$; at stage $n\geq 1$,
the open set $V_n\subset U_n$ being chosen by $\beta$, Player $\alpha$ gives
$U_n\subset
V_n$. 
Player $\alpha$ wins the play if
 $\cap_{n\in\mathbb N}U_n\subset R$. It is well known that
$X$ is $BM(R)$-$\alpha$-favorable (i.e., $\alpha$
has a winning strategy in the game $BM(R)$)  if and only if  $R$
is a residual subset of $X$. The reader is referred to  \cite{oxt}. \par
  Let us say that the space $Y$ is {\it fragmented} by $\Delta\subset Y\times Y$
  if every nonempty subspace of  $Y$ admits a nonempty (relatively) open subset $U$
  such that $U\times U\subset \Delta$. In the next
  statement, $X$, $Y$ are topological spaces,  
  $(\Delta_x)_{x\in X}$ is an $X$-indexed collection of subsets of $Y\times Y$ 
and $\mathcal L$ is  collection of nonempty
subsets of $Y$  such that for every $y\in Y$, Player I has a winning strategy in the game $G({\mathcal L},y)$.
\begin{proposition} Let $R$ be 
 a second category  subset  of $X$
 such that $Y$ is fragmented by $\Delta_x$ for each $x\in R$. Then, for
  every nonempty open set $V\subset Y$, there exist a nonempty
open set $U\subset X$, $y\in V$ and
$L\in\mathcal L$, with $L\subset V$,
such that for every   $b\in L$  the set $\{x\in U: (b,y)\in \Delta_x\}$ is 
dense in $U$.
\end{proposition}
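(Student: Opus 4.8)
The plan is to argue by contradiction: assuming that no triple $(U,y,L)$ as in the statement exists for some fixed nonempty open $V\subset Y$, I will show that $R$ must be meager, contradicting the hypothesis that $R$ is of second category. The meagerness will be produced through the Banach--Mazur game: I shall describe a strategy for $\alpha$ in $BM(X\setminus R)$ and verify it is winning, so that $X\setminus R$ is residual by the characterization quoted before the statement, i.e. $R$ is of first category. To prepare the ground I fix an arbitrary $y_\ast\in V$ and, using that Player I has a winning strategy in $G(\mathcal L,y_\ast)$ together with Lemma 2.2 applied to the neighbourhood $V$ of $y_\ast$, I fix a winning strategy $\sigma$ for Player I in $G(\mathcal L,y_\ast)$ with $\sigma(s)\subset V$ for every $s\in Y^{<\omega}$. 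The negation of the conclusion then reads: for every nonempty open $U\subset X$, every $y\in V$ and every $L\in\mathcal L$ with $L\subset V$ there are $b\in L$ and a nonempty open $U'\subset U$ with $(b,y)\notin\Delta_x$ for all $x\in U'$; I call this the \emph{refutation property} and feed it into the construction.

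The construction interleaves a play of $BM(X\setminus R)$ with a play of $G(\mathcal L,y_\ast)$ driven by $\sigma$. Whenever $\beta$ has just played a nonempty open $V_n\subset X$, I compute $L_n=\sigma(y_0,\dots,y_{n-1})\in\mathcal L$ (so $L_n\subset V$), apply the refutation property to the triple $(V_n,y_\ast,L_n)$ to obtain a point $y_n\in L_n$ and a nonempty open $U_n\subset V_n$ with $(y_n,y_\ast)\notin\Delta_x$ for every $x\in U_n$, and let $\alpha$ answer $U_n$. By construction the chosen points $(y_n)_{n\in\mathbb N}$ form a play of $G(\mathcal L,y_\ast)$ compatible with $\sigma$; since $\sigma$ is winning for Player I, $y_\ast\in\overline{\{y_n:n\in\mathbb N\}}$. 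Moreover, for every $n$ and every $x\in\bigcap_k U_k$ one has $(y_n,y_\ast)\notin\Delta_x$.

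It remains to check that $\bigcap_k U_k\cap R=\emptyset$, for then $\alpha$ wins $BM(X\setminus R)$ and the proof is finished. So take $x\in\bigcap_k U_k$ and suppose, toward a contradiction, that $x\in R$. Then $Y$ is fragmented by $\Delta_x$; applying this to the nonempty subspace $\overline{\{y_n:n\in\mathbb N\}}$, which contains $y_\ast$, yields a nonempty relatively open $W$ with $W\times W\subset\Delta_x$, and in fact such good pieces are relatively dense in that subspace. The crux, and the step I expect to be the main obstacle, is to force one of these good pieces to witness a forbidden pair: if some good $W$ contains $y_\ast$, then $y_\ast\in\overline{\{y_n\}}$ makes $W$ meet $\{y_n\}$, so $(y_n,y_\ast)\in W\times W\subset\Delta_x$ for some $n$, contradicting the previous paragraph.

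To guarantee that this always occurs, regardless of where the fragmentation pieces sit, I would refine the bookkeeping in the spirit of the enumeration $n\mapsto(\phi(n),\psi(n))$ of Proposition 2.1: instead of refuting only against the fixed target $y_\ast$, I would schedule along the play refutations against every previously chosen point $y_i$ as target (each lies in $V$), thereby arranging $(y_j,y_i)\notin\Delta_x$ for all scheduled pairs $i<j$ as well as $(y_n,y_\ast)\notin\Delta_x$. A nonempty good piece $W$ necessarily meets the dense set $\{y_n\}$; once the scheduling ensures that such a $W$ must contain either $y_\ast$ together with some chosen point, or two distinct chosen points, the inclusion $W\times W\subset\Delta_x$ contradicts one of the relations $(y_n,y_\ast)\notin\Delta_x$ or $(y_j,y_i)\notin\Delta_x$. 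This gives $x\notin R$, completing the contradiction and hence the proof. The delicate point throughout is precisely this interaction between the transfinite nature of fragmentation on $\overline{\{y_n\}}$ and the single sequence of picks the game produces, and it is where the bookkeeping has to be set up with care.
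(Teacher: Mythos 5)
Your overall architecture coincides with the paper's: argue by contradiction, convert the failure of the conclusion into dense open sets of ``refuting'' points of $X$, and build a winning strategy for $\alpha$ in $BM(X\setminus R)$ by interleaving the Banach--Mazur play with point-picking plays. The single-target part of your construction (targeting $y_\ast$ with the localized strategy $\sigma$ from Lemma 2.2) is set up correctly. But the step you yourself flag as the crux is exactly where the argument breaks, and the fix you sketch does not repair it. Fragmentation of the subspace $S=\{y_n:n\in\mathbb N\}\cup\{y_\ast\}$ by $\Delta_x$ only yields a nonempty \emph{relatively} open $W\subset S$ with $W\times W\subset\Delta_x$; nothing prevents $W$ from being the singleton $\{y_i\}$ of a relatively isolated pick, and then $W\times W$ lies on the diagonal, which is automatically contained in $\Delta_x$ (apply fragmentability to one-point subspaces). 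So no contradiction results. Arranging the pairwise relations $(y_j,y_i)\notin\Delta_x$ for scheduled pairs, as you propose, cannot exclude this: those relations constrain the pairs you happened to choose, not the relatively open sets that the fragmentation hands back after the play is over, and no ``scheduling'' of picks can force $W$ to contain two of them.

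The missing mechanism is the one the paper uses: for \emph{each} chosen point $y$ (not only $y_\ast$) you must run a full auxiliary play of $G(\mathcal L,y)$ compatible with Player I's winning strategy $\tau_y$, localized inside $V$ by Lemma 2.2, with all these plays interleaved diagonally; the refutations with target $y$ must use the sets $L$ produced by $\tau_y$, not arbitrary members of $\mathcal L$. Since $\tau_y$ is winning for Player I, $y$ lies in the closure of the picks targeted at it, and each such pick $b$ satisfies $(b,y)\notin\Delta_x$ for all $x\in\bigcap_k U_k$. Then, whichever chosen point $y$ the good piece $W$ contains, $W$ is a relative neighborhood of $y$ and therefore contains one of the picks targeted at $y$, producing a pair that violates $W\times W\subset\Delta_x$. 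Your sketch never invokes the winning strategies $\tau_{y_i}$ for the secondary targets, so the clustering of the targeted picks at each $y_i$ --- which is the entire content of the game hypothesis on $Y$ --- is not available, and the contradiction does not close.
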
 
\begin{proof} Assume, on the contrary, that the claim
is false for 
some nonempty open set $V\subset Y$, and let us prove    
that $X\setminus R$ is a residual subset of $X$ (i.e., $R$
is of the first category in $X$). For each $y\in V$ and
$L\in\mathcal L $, with $L\subset V$, let $D(y,L)$
be the set of $x\in X$ for which there is $b\in L$ such that $(b,y)\not\in \Delta_a$
for every $a$ in some neighborhood of $x$  in $X$; by our assumption,
the open set  $D(y,L)$
is dense in $X$. \par 
For each  $(x,y,L)$ (where $(x,y)\in X\times V$, $L\in\mathcal L$ and $L\subset
V$) such that
$x\in D(y,L)$, choose  a point
$c_L(x,y)\in L$ and an open neighborhood $O(L,x,y)$ of $x$ in $X$ such
that  $(c_L(x,y),y)\not\in \Delta_a$ for every
$a\in O(L,x,y)$. Let us fix for each $y\in V$ a winning
strategy $\tau_y\subset V$ 
for Player I in the game $G({\mathcal L},y)$ (Lemma 2.2). We shall define a 
winning
strategy
$\sigma$ for Player $\alpha$ in the Banach-Mazur game $BM(X\setminus R)$ on $X$
as follows. Let $V_0$ be the first move of Player $\beta$ in the game
$BM(X\setminus R)$ and put
$\sigma(V_0)=V_0\cap D_0(y_0,L_0^{y_0})$, where $y_0$ is an arbitrary (but
fixed) point of $V$ and $L_0^{y_0}=\tau_{y_0}(\emptyset)$. Define 
$F_0=\{y_0\}$.\par
At stage $1$, if
$V_1$ is
the response of $\beta$ to $\sigma(V_0)$, first choose
$x_1\in V_1$, put
$y_1=c_{L_0^{y_0}}(x_1,y_0)$ and $F_1=\{y_1\}$. Then define
$\sigma(V_0,V_1)$ to be the nonempty open subset of $X$  given by
$$V_1\cap O(L_0^{y_0},x_1,y_0)\cap D(y_0,L_1^{y_0})\cap
D(y_1,L_0^{y_1}),$$
where $L_1^{y_0}=\tau_{y_0}(y_1)$ and $L_0^{y_1}=\tau_{y_1}(\emptyset)$.\par 
 At stage $2$, if
$V_2$ is
the response of $\beta$ to $\sigma(V_0, V_1)$, first choose
$x_2\in V_2$,   
 put $y_2=c_{L_1^{y_0}}(x_2,y_0)$, $y_3=c_{L_0^{y_1}}(x_2,y_1)$ and
$F_2=\{y_2,y_3\}$; then define
$\sigma(V_0,V_1,V_2)$ to be the nonempty open set given by
$$V_2\cap O(L_1^{y_0},x_2,y_0)\cap O(L_0^{y_1},x_2,y_1)\cap D(y_0,
L_2^{y_0})\cap D(y_1,L_1^{y_1})\cap\big[ \bigcap_{y\in F_2}D(y,L_0^y)\big],$$
where $L_2^{y_0}=\tau_{y_0}(y_1,y_2)$, $L_1^{y_1}=\tau_{y_1}(y_3)$ and
$L_0^{y}=\tau_{y}(\emptyset)$ for $y\in F_2$. \par 
Continuing  inductively, the notations will become more and more
complicated but the process allows us to define
a strategy $\sigma$ for Player $\alpha$ in the
game $BM(X\setminus R)$ with the following property:  To each play
$s=(V_n)_{n\in\mathbb N}$ for
$\beta$ against $\sigma$ corresponds a set
$F_s=\cup_{n\in\mathbb N}F_n\subset Y$
such that
for each $y\in F_s$  there is a play $(y_n)_{n\in\mathbb N}\subset F_s$
of Player II in the game
$G({\mathcal L},y)$  against the strategy
$\tau_y$  such that $(y_n,y)\not\in \Delta_x$ for every $x\in\cap_{n\in\mathbb N}V_n$ and
$n\in\mathbb N$. \par 
To conclude, let   $s=(V_n)_{n\in\mathbb N}$ be a play for Player $\beta$ against $\sigma$ and
let us
show that $\cap_{n\in\mathbb N}V_n\subset X\setminus R$.  
Let $x\in\cap_{n\in\mathbb N}V_n$ and  suppose that  $x\in R$. There is 
an open set $O\subset Y$ such that $O\cap F_s\not=\emptyset$ and $(O\times
O)\cap (F_s\times F_s)\subset \Delta_x$.
Let $y\in O\cap F_s$; since $y\in\overline{\{y_n:n\in\mathbb N\}}$, there is $n
\in\mathbb N$ such that $(y_n,y)\in\Delta_x$, which is 
 a contradiction.
 \end{proof}
Throughout the rest of the paper,   $f: X\times Y\to Z$ is a mapping, where
$(Z,d)$ is a metric space. Let $\varepsilon>0$. We shall
apply Proposition 3.1 to
the collection of subsets of $Y\times Y$
of the form $\Delta_x=\{(y,z)\in Y\times Y: d(f(x,y),f(x,z))<\varepsilon\}$, $x\in X$.
Clearly, 
the ``$\varepsilon$-fragmentability'' of  the mapping $f_x$  for $x\in X$ as
defined in the introduction means
that $Y$ is fragmented by $\Delta_x$.
\begin{remark} {\rm Let $y\in Y$ be such that $f_x$ is continuous at $y$
for every
$x\in R$ (notations of Proposition 3.1).\par
(i) If Player I  has
a winning strategy $\tau_y$
in the game $G({\mathcal L},y)$ on $Y$, 
then, involving only the strategy $\tau_y$, the same method in the above
proof allows to establish the following assertion:  $(*)$ For every
$\varepsilon>0$ and any
neighborhood
$V$ of $y\in Y$, there is a nonempty open set $U\subset X$ and $L\in\mathcal L$, $L\subset V$,
such  the sets $\{x\in U:
d(f(x,b),f(x,y))<\varepsilon\}$, $b\in L$, are
dense in $U$.\par 
(ii) If $y$ has a countable network ${\mathcal N}\subset\mathcal L$,
then the above property $(*)$ can be proved easily as follows: Proceeding by
contradiction as in the proof of Proposition 3.1,
since the open subset
$D(y,L)$ of $X$ is   dense in $X$ for every $L\in\mathcal L$ with $L\subset V$, there is $x\in R$ such that $x\in D(y,L)$ for every
$L\in\mathcal N$ with $L\subset V$. This gives
a countable set $\{y_n:n\in\mathbb N\}\subset Y$ such that
$y\in\overline{\{y_n:n\in\mathbb N\}}$ and $d(f(x,y_n),f(x,y))\geq  \varepsilon$
for every $n\in\mathbb N$, which is absurd since $f_x$ is continuous at $y$.} 
\end{remark}
The following interesting concept
 is formulated in \cite{MN} (concepts quite similar  were studied
by K. B\" ogel in his papers \cite {B1,B2}): The mapping $f:X\times Y\to Z$ is said to be
horizontally continuous at $(a,b)\in X\times Y$
if for every neighborhood $W$ of $f(a,b)$ in $Z$ and every neighborhood $U\times
V$ of
$(a,b)$ in $X\times Y$, there are a nonempty open set $O\subset U$
and $y\in V$ such that $f(O\times\{y\})\subset W$. 
\par The mapping $f:X\times Y\to Z$ is said to be lower quasicontinuous
with respect to the variable $x$ at the point $(a,b)\in X\times Y$ if for every
neighborhood $W$ of $f(a,b)$ in $Z$ and every neighborhood $U\times V$ of $(a,b)$ in $X\times Y$, there
is a nonempty open set $O\subset U$ such that for each $x\in O$ there is
$y\in V$ such that  $f(x,y)\in W$. (This concept is introduced in a forthcoming
paper with J.-P. Troallic.)  Lower quasicontinuity with respect to the
variable
$y$ is defined similarly.  Note that the quasicontinuity of $f^b$ at $a\in X$
implies
that $f$ is horizontally quasicontinuous at $(a,b)$ which in turn implies
that $f$ is lower quasicontinuous with respect to the variable $x$ at $(a,b)$. \par
We continue to assume  (in Proposition 3.3 and Theorem 3.4 below) 
that for every $y\in Y$, Player I has a winning strategy in the game $G({\mathcal L},y)$. 

\begin{proposition} Suppose that $f_x$ is fragmentable for each $x$ in a second
category
set $R$ in $X$, $f^y$ is cliquish for every $y\in Y$ and $f$ is
horizontally quasicontinuous. Let $V\subset Y$ be a
nonempty open set. Then, there exist $b\in Y$ and a nonempty open set
$O\times W\subset X\times V$  
such that $d(f(x,y),f(x',b))\leq \varepsilon$ for every
$x, x'\in O$ and $y\in W$, in each of the following:
\begin{itemize}
\item[{\rm (i)}] ${\mathcal L}={\mathcal O}$.
\item[{\rm (ii)}] ${\mathcal L}={\mathcal A}$,  $f^y$ is quasicontinuous for every $y\in Y$
and $f$ is lower  quasicontinuous with
respect to the variable $y$.
\end{itemize}
\end{proposition}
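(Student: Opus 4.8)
The plan is to feed Proposition 3.1 with the family $\Delta_x=\{(s,t)\in Y\times Y: d(f(x,s),f(x,t))<\delta\}$, where $\delta$ is a fixed small fraction of $\varepsilon$; the hypothesis that $Y$ be fragmented by $\Delta_x$ is precisely the $\delta$-fragmentability of $f_x$. Applied to the open set $V$, this produces a nonempty open $U\subset X$, a point $y_0\in V$ and $L\in\mathcal L$ with $L\subset V$ such that, for every $b\in L$, the set $D_b=\{x\in U: d(f(x,b),f(x,y_0))<\delta\}$ is dense in $U$. This density is the coupling between the two factors that I intend to exploit; all of the remaining hypotheses serve only to upgrade these \emph{dense} sets into genuine open boxes.

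First I would anchor a reference value. Since $f^{y_0}$ is cliquish in case (i) (quasicontinuous in case (ii)), there is a nonempty open $O_0\subset U$ on which $f^{y_0}$ has oscillation $<\delta$; fixing $x_0\in O_0$ and putting $c_0=f(x_0,y_0)$ gives $d(f(x,y_0),c_0)<\delta$ for all $x\in O_0$, and combining with the density of $D_b$ yields $d(f(x,b),c_0)<2\delta$ on the dense subset $D_b\cap O_0$ of $O_0$, for every $b\in L$. Next I would manufacture an open ``reference line''. Choosing $b_1\in L$ and $x_1\in D_{b_1}\cap O_0$, in case (i) I would apply horizontal quasicontinuity at $(x_1,b_1)$ with target ball $B(f(x_1,b_1),\delta)$ and neighbourhood $O_0\times W_0$, where $W_0\subset L$ is an open neighbourhood of $b_1$ (available since $L$ is open); this returns a nonempty open $O\subset O_0$ and a point $b\in W_0$ with $f(O\times\{b\})\subset B(c_0,3\delta)$. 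In case (ii) the same effect comes from the quasicontinuity of $f^{b_1}$ at $x_1$, with $b=b_1$. Either way I obtain a nonempty open $O\subset X$ and a point $b$ with $d(f(x',b),c_0)<3\delta$ for all $x'\in O$, which will serve as the point of the statement.

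It then remains to produce the open factor $W\subset V$ and to control $f(x,y')$ on $O\times W$, so that the triangle inequality $d(f(x,y'),f(x',b))\le d(f(x,y'),c_0)+d(c_0,f(x',b))$ closes below $\varepsilon$ once $\delta$ is fixed small enough. In case (i) I would carve $W$ directly out of the open set $L$; in case (ii), where $L$ is only somewhere dense, I would extract $W$ from lower quasicontinuity with respect to the variable $y$, applied at a point of $O\times\mathrm{Int}(\overline L)$ whose value is near $c_0$, and pin the values back to $c_0$ through the density sets $D_{y'}$ and the quasicontinuity of the sections.

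The main obstacle — and, I suspect, the reason the sectional hypotheses are arranged differently in the two cases — is \emph{uniformity in the $X$-factor}. Both the coupling from Proposition 3.1 and the quasicontinuity selections produce, a priori, an open subset of $X$ that depends on the chosen $y'$, whereas the conclusion demands a single $O$ valid for \emph{every} $y'\in W$ simultaneously. Fragmentability cannot be used to repair this, since it is available only pointwise in $x$, and not at the specially chosen points (neither $D_b\cap R$ nor $O_0\cap R$ need be nonempty). I therefore expect the decisive step to be a coordinated selection: fix the candidate box $O\times W$ first, then verify the estimate along the dense sets $D_{y'}$ and close these up to honest open sets — via horizontal quasicontinuity in case (i), and via the quasicontinuity of the sections together with lower quasicontinuity in $y$ in case (ii) — in such a way that all the $\delta$-estimates hold at once on $O\times W$.
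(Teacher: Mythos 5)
Your setup matches the paper's: you apply Proposition 3.1 with $\Delta_x$ the $\delta$-oscillation relation to get $U$, a centre point (your $y_0$, the paper's $b$) and $L\in\mathcal L$ with all the sets $D_{b}$ dense in $U$, and you then use cliquishness of the section at the centre point to shrink $U$ to an open $O$ on which that section has small oscillation. You have also correctly identified the crux: the estimate must hold for \emph{every} $y\in W$ with a \emph{single} $O$, while Proposition 3.1 only gives density of each $D_{y}$ in $U$. But your proposal stops exactly there. The final paragraph announces that you ``expect the decisive step to be a coordinated selection'' that closes the dense sets $D_{y'}$ up to open sets simultaneously --- this is not an argument, and the direct strategy it gestures at (upgrading each dense $D_{y'}$ to an open set and making them all agree) is not how the difficulty is resolved and would not obviously terminate.

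The missing idea is that the uniform estimate is proved \emph{by contradiction}, propagating a single bad pair rather than verifying all good pairs. Take $W=L$ in case (i) (resp.\ $W={\rm Int}(\overline L\cap V)$ in case (ii)) and suppose $d(f(x_0,y_0),f(x_1,b))>\varepsilon$ for some $x_0,x_1\in O$, $y_0\in W$. Horizontal quasicontinuity at $(x_0,y_0)$ (resp.\ lower quasicontinuity in $y$ followed by quasicontinuity of $f^{y_1}$) converts this single violation into an open set $O_1\subset O$ and a point $y_1\in L$ with $d(f(a,y_1),f(x_1,b))>\varepsilon$ for \emph{all} $a\in O_1$. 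Only now does the density of $D_{y_1}$ enter: picking $a_1\in O_1\cap D_{y_1}$ gives $d(f(a_1,y_1),f(a_1,b))\leq\varepsilon/2$, hence $d(f(a_1,b),f(x_1,b))>\varepsilon/2$, contradicting ${\rm diam}(f^b(O))\leq\varepsilon/2$. No simultaneous selection over $y\in W$ is ever needed. A secondary point: your detour manufacturing a new reference point $b$ near some $b_1\in L$ via horizontal quasicontinuity is unnecessary --- the centre point delivered by Proposition 3.1 already serves as the $b$ of the conclusion, and introducing a second reference point only obscures the role of the dense sets $D_y$, which measure closeness to that centre and to nothing else.
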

\begin{proof}  By Proposition 3.1,  there are 
$b\in V$,
a nonempty open set $U\subset X$ and $L\in\mathcal L$ with $L\subset V$, 
such that for every $y\in  L$ the set $D_y=\{x\in U:
d(f(x,y),f(x,b))\leq\varepsilon/2\}$ is dense in $U$.  Since $f^b$
is cliquish in both
cases, there is a nonempty open
set $O\subset U$ such that 
${\rm diam}(f^b(O))\leq\varepsilon/2$. \par 
To prove (i), we take $W=L$.  Suppose that
$d(f(x_0,y_0),f(x_1,b))>\varepsilon$ for some $x_0,x_1\in O$ and $y_0\in L$.
Since $f$
is horizontally quasicontinuous at $(x_0,y_0)$ and $L$ is open, there is a nonempty open set
$O_1\subset O$ and $y_1\in L $
such that $d(f(a,y_1),f(x_1,b))>\varepsilon$ for every $a\in O_1$. Let $a_1\in
O_1\cap D_{y_1}$; it follows from 
 $d(f(a_1,y_1),f(a_1,b))\leq\varepsilon/2$ that
$d(f(a_1,b),f(x_1,b))> \varepsilon/2$,
which is a contradiction.\par   
 To prove (ii), we take $W={\rm Int}({\overline L}\cap V)$ ; since
 $L\in\mathcal A$ and 
$L\subset V$, $W$
is nonempty.  Suppose
that $d(f(x_0,y_0),f(x_1,y))> \varepsilon$ for some $x_0,x_1\in O$ and $y_0\in W$. 
There is a nonempty open set $W_1\subset W$
such that for each $y\in W_1$ there is $a\in O$ such that
$d(f(a,y),f(x_1,b))>\varepsilon$; taking
$y_1\in W_1\cap L$, we obtain $d(f(a_1,y_1),f(x_1,b))>\varepsilon$ for some
$a_1\in O$. Since $f^{y_1}$ is quasicontinuous, there is a nonempty open set $O_1\subset O$
such that $d(f(a,y_1),f(x_1,b))>\varepsilon$ for every $a\in O_1$; as in the proof of (i), taking
$a\in O_1\cap D_{y_1}$ gives the contradiction $d(f(a,b),f(x_1,b))>\varepsilon/2$. 
\end{proof}

Now we state the first main result of this note. 
\begin{theo} Suppose that $f$ is horizontally quasicontinuous, $f^y$ is cliquish
for every $y\in Y$ and  $f_x$ is fragmentable  for each $x$ in the Baire space $X$.   Then $f$ is
cliquish in each of the following:
\begin{itemize}
\item[{\rm (i)}] ${\mathcal L}={\mathcal O}$;
\item[{\rm (ii)}] ${\mathcal L}={\mathcal A}$, $f^y$ is quasicontinuous for
every $y\in Y$ and $f$ is lower  quasicontinuous with
respect to the variable $y$.
\end{itemize}
\end{theo}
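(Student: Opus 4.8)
The plan is to reduce cliquishness of $f$ to a single application of Proposition 3.3, the point being that Proposition 3.3 already produces, on any prescribed open box, a small region where $f$ is pinned down to within $\varepsilon$ of a value $f(\cdot,b)$ depending only on the $X$-coordinate. Recall that $f$ is cliquish exactly when, for every $\varepsilon>0$ and every nonempty open subset of $X\times Y$, there is a nonempty open subset on which the oscillation of $f$ is less than $\varepsilon$. Since the boxes $U\times V$ with $U\subset X$, $V\subset Y$ nonempty open form a base of $X\times Y$, it suffices to fix $\varepsilon>0$ and such a box $U\times V$ and to exhibit a nonempty open $O\times W\subset U\times V$ with $\mathrm{diam}(f(O\times W))<\varepsilon$.

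First I would localize Proposition 3.3 to the box. Because $X$ is Baire and $U$ is open, $U$ is itself a Baire space, hence a second category subset of itself; so I apply the machinery with $X$ replaced by the Baire space $U$ and with $R:=U$. All the hypotheses restrict correctly: the sections $f_x\colon Y\to Z$ are unchanged (and fragmentable for every $x\in U$), the restriction of each cliquish, respectively quasicontinuous, section $f^y$ to the open set $U$ is again cliquish, respectively quasicontinuous, and horizontal quasicontinuity as well as lower quasicontinuity with respect to $y$ are local properties that pass to $U\times Y$; the standing game assumption concerns only $Y$ and is untouched. Running Proposition 3.3 with $\varepsilon$ replaced by $\varepsilon/3$ then yields a point $b\in Y$ and a nonempty open set $O\times W\subset U\times V$ such that $d(f(x,y),f(x',b))\le \varepsilon/3$ for all $x,x'\in O$ and $y\in W$. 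In case (i) I invoke part (i) of Proposition 3.3, and in case (ii) part (ii), with its extra hypotheses on $f^y$ and on lower quasicontinuity with respect to $y$; the conclusion of the two parts is literally the same, so the remainder of the argument is identical.

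It then remains to convert this ``pinning to $f(\cdot,b)$'' into a genuine oscillation bound by a triangle inequality. For arbitrary $(x_1,y_1),(x_2,y_2)\in O\times W$, writing $d(f(x_1,y_1),f(x_2,y_2))\le d(f(x_1,y_1),f(x_2,b))+d(f(x_2,b),f(x_2,y_2))$ and applying the displayed estimate once with $(x,x',y)=(x_1,x_2,y_1)$ and once with $(x,x',y)=(x_2,x_2,y_2)$, each term is at most $\varepsilon/3$, so $d(f(x_1,y_1),f(x_2,y_2))\le 2\varepsilon/3<\varepsilon$. Thus $\mathrm{diam}(f(O\times W))<\varepsilon$, and since $U\times V$ was an arbitrary basic open box and $\varepsilon>0$ arbitrary, $f$ is cliquish in both cases.

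I do not expect a serious obstacle here: Proposition 3.3 carries essentially all of the difficulty, and what is left is the elementary two-step triangle inequality above. The only point deserving a careful (but routine) word is the localization in the second paragraph, namely that passing from $X$ to the open Baire subspace $U$ preserves each hypothesis — in particular that restrictions of cliquish, quasicontinuous, and horizontally/lower-quasicontinuous maps to open subsets retain these properties, and that the second-category requirement on $R$ is met by $U$ itself. Once that is granted, the theorem follows immediately.
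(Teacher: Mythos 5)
Your proposal is correct and follows essentially the same route as the paper: apply Proposition 3.3 with $X=R=U$ to pin $f$ near $f(\cdot,b)$ on a sub-box, then conclude by the two-step triangle inequality (the paper simply accepts the bound $2\varepsilon$ rather than rescaling to $\varepsilon/3$). Your explicit check that the hypotheses localize to the open Baire subspace $U$ is a point the paper leaves implicit, but it is routine and does not change the argument.
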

\begin{proof} Let $U\times V\subset X\times Y$ be a nonempty open set and
$\varepsilon>0$. By Proposition 3.3 (for $X=R=U$),  there
are
$b\in Y$ and a nonempty open set $O\times W\subset U\times V$
such that $d(f(x,y),f(x',b))\leq\varepsilon$ for every $x,x'\in O$ and $y\in W$.
For every $x,x'\in O$ and $y,y'\in W$, we have 
\begin{align*}
d(f(x,y),f(x',y'))&\leq d(f(x,y),(x',b))+d(f(x',b),f(x',y'))\\
&\leq 2\varepsilon.
\end{align*}
\end{proof}
In view of Proposition 2.1 and Theorem 3.4-(i), we obtain the following improvement
of Debs's result  mentioned in the introduction.
\begin{cor} Suppose that for each $y\in Y$, $\pi_{\chi}(y, Y)\leq\aleph_0$ and $f^y$ is quasicontinuous, and
  $X$ is a Baire space and $f_x$ is fragmentable for each $x\in X$.
Then
$f$ is cliquish.
\end{cor}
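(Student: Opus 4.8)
The plan is to realize Corollary 3.5 as a direct instance of Theorem 3.4-(i), taking ${\mathcal L}={\mathcal O}$. To apply that theorem I must verify its three structural hypotheses together with the standing assumption that Player I has a winning strategy in $G({\mathcal O},y)$ for every $y\in Y$. The conditions on $X$ (Baire) and on the fragmentability of the $x$-sections are assumed outright in the corollary, so the real work is to convert the assumptions on the $y$-sections and on $\pi_\chi(y,Y)$ into the game-theoretic and continuity conditions demanded by the theorem.

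First I would produce the winning strategy in $G({\mathcal O},y)$. Since $\pi_\chi(y,Y)\leq\aleph_0$, fix a countable $\pi$-base $\{B_n:n\in\mathbb N\}$ of nonempty open sets at $y$. Let Player I play, at stage $n$, the open set $B_n$, disregarding Player II's responses. Whatever point $y_n\in B_n$ Player II then selects, any neighborhood $U$ of $y$ contains some $B_n$ by the defining property of a $\pi$-base, and hence contains $y_n$; thus $y\in\overline{\{y_n:n\in\mathbb N\}}$ and Player I wins. This is exactly the situation analyzed after Proposition 2.1, where countable $\pi$-character is singled out as the model case in which Player I prevails; for the class ${\mathcal O}$ the non-adaptive strategy just described settles the matter outright.

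Next I would extract the two remaining hypotheses from the quasicontinuity of the $y$-sections. Cliquishness of each $f^y$ is immediate, since the introduction records that quasicontinuous mappings are cliquish. For horizontal quasicontinuity, the remark following the definition of that notion states that quasicontinuity of $f^b$ at a point $a\in X$ already forces $f$ to be horizontally quasicontinuous at $(a,b)$; as $f^y$ is quasicontinuous for every $y$, this holds at every point of $X\times Y$. With the winning strategy in $G({\mathcal O},y)$ in hand and all three structural hypotheses of Theorem 3.4-(i) verified, that theorem yields the cliquishness of $f$.

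I do not expect a genuine obstacle here: the corollary is a specialization in which each abstract hypothesis of Theorem 3.4-(i) is supplied either by a standard fact (quasicontinuous $\Rightarrow$ cliquish, and quasicontinuity of a section $\Rightarrow$ horizontal quasicontinuity) or by an easy unfolding of a definition. The only point deserving care is the game-theoretic translation, and even there the explicit strategy built from a countable $\pi$-base disposes of $G({\mathcal O},y)$ directly, so no appeal to the more delicate duality or to the somewhere-dense class ${\mathcal A}$ is required.
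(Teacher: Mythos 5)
Your proposal is correct and follows the paper's own route: the paper derives Corollary 3.5 by specializing Theorem 3.4-(i), exactly as you do, with the quasicontinuity of the sections $f^y$ supplying both their cliquishness and the horizontal quasicontinuity of $f$. The only (harmless) divergence is that you verify the game hypothesis by the direct non-adaptive $\pi$-base strategy in $G({\mathcal O},y)$ rather than by citing Proposition 2.1, which is if anything cleaner, since Proposition 2.1 as stated produces a winning strategy only in $G({\mathcal A},y)$ while Theorem 3.4-(i) calls for one in $G({\mathcal O},y)$.
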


The concept of cliquish mapping  extends in a natural way to 
 mappings taking their values in   uniform spaces. Therefore, Theorem
3.4 and Corollary 3.5 hold more generally for every uniform  space $Z$. Let us
also note that the assumption on the $y$-sections of $f$ in Theorem 3.4 allows 
to assume
in this statement that the $x$-sections are fragmentable for $x$ belonging
to  a dense Baire subspace of $X$. \par 
 If the $x$-sections of the mapping $f$ are continuous, then 
the cliquishness of $f$ in Theorem 3.4 can be significantly improved, as we propose
to show in what follows. We need  a variant of Proposition 3.3.
\begin{proposition} Suppose that  $f_x$ is continuous for
each $x\in X$, $f^y$
is quasicontinuous for every $y\in Y$ and  $X$ is a Baire space. Let
$b\in
Y$ be such that Player I has a winning strategy in the game $G({\mathcal A},b)$
on
$Y$ and let $V$ be a neighborhood of $b$ in $Y$. Then, for every nonempty
open set $U\subset X$, there is a nonempty open set $O\times W\subset U\times V$
such that $d(f(x,y),f(x,b))\leq\varepsilon$ for every $(x,y)\in O\times W$.
\end{proposition}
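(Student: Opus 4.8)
The plan is to mimic the proof of Proposition 3.3(ii), the decisive point being that the blanket hypothesis ``$f^y$ is quasicontinuous for every $y\in Y$'' already supplies, at $y=b$, the cliquishness of the section $f^b$ that played the central role there.

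First I would pass from $U$ to a smaller open set carrying the density information. Since $U$ is open in the Baire space $X$, it is itself a Baire space; applying Remark 3.2(i) inside $U$ (with $R=U$, with the fixed point taken to be $b$, with $\varepsilon/2$ in place of $\varepsilon$, and using that $f_x$ is continuous — hence continuous at $b$ — for every $x$, together with the winning strategy of Player I in $G(\mathcal A,b)$) yields a nonempty open set $U_0\subset U$ and a set $L\in\mathcal A$ with $L\subset V$ such that for every $a\in L$ the set $D_a=\{x\in U_0:d(f(x,a),f(x,b))<\varepsilon/2\}$ is dense in $U_0$. Next I set $W_0=\mathrm{Int}(\overline L\cap V)$; exactly as in Proposition 3.3(ii) one checks $W_0$ is a nonempty open subset of $V$, and since $W_0\subset\overline L$, every nonempty open subset of $W_0$ meets $L$, i.e. $L$ is dense in $W_0$. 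Finally, because $b\in Y$, the section $f^b$ is quasicontinuous, hence cliquish, so there is a nonempty open $O\subset U_0$ with $d(f(x,b),f(x',b))\le\varepsilon/2$ for all $x,x'\in O$. I claim $O\times W_0$ is the desired box.

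The verification is by contradiction: suppose some $(x_0,y_0)\in O\times W_0$ has $d(f(x_0,y_0),f(x_0,b))>\varepsilon$. Here continuity of $f_{x_0}$ replaces the lower quasicontinuity used in Proposition 3.3(ii): the set $\{y\in W_0:d(f(x_0,y),f(x_0,b))>\varepsilon\}$ is open and nonempty, so by density of $L$ in $W_0$ it contains a point $y_1\in L$ with $d(f(x_0,y_1),f(x_0,b))>\varepsilon$. Quasicontinuity of $f^{y_1}$ at $x_0$ (applied to the open set $\{z\in Z:d(z,f(x_0,b))>\varepsilon\}$, which contains $f(x_0,y_1)$) then produces a nonempty open $O_1\subset O$ on which $d(f(x,y_1),f(x_0,b))>\varepsilon$ throughout. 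Choosing $x_1\in O_1\cap D_{y_1}$ (nonempty since $D_{y_1}$ is dense in $U_0$) and combining $d(f(x_1,y_1),f(x_0,b))>\varepsilon$, the cliquishness estimate $d(f(x_0,b),f(x_1,b))\le\varepsilon/2$, and $x_1\in D_{y_1}$, the triangle inequality yields $d(f(x_1,y_1),f(x_1,b))>\varepsilon/2$, contradicting $x_1\in D_{y_1}$. Hence $d(f(x,y),f(x,b))\le\varepsilon$ for all $(x,y)\in O\times W_0$.

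The key idea — and the reason this is a genuine variant of Proposition 3.3(ii) rather than a new argument — is the observation that the hypothesis on the $y$-sections makes $f^b$ itself quasicontinuous, hence cliquish, so the $\mathrm{diam}(f^b(O))$ control is available for free, while continuity of the $x$-sections supplies the $y$-spreading previously obtained from lower quasicontinuity. I expect the only step needing care to be the first extraction: producing the dense sets $D_a$ localized inside the prescribed $U$ and anchored at the specific point $b$. This is exactly what Remark 3.2(i) delivers, so the task reduces to verifying its hypotheses (continuity of each $f_x$ at $b$, and a winning strategy for Player I in $G(\mathcal A,b)$) and to the routine topological facts about $W_0$; the contradiction itself is then a direct transcription of the argument in Proposition 3.3(ii).
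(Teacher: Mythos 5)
Your proof is correct and takes essentially the same route as the paper, which simply invokes Remark 3.2(i) to produce $L\in\mathcal A$ with the dense sets $D_a$ and then says to repeat the argument of Proposition 3.3(ii) ``more simply, because here the $f_x$'s are continuous.'' You have merely written out the details the paper leaves implicit: localizing Remark 3.2(i) to the Baire subspace $U$, using quasicontinuity of $f^b$ to control ${\rm diam}(f^b(O))$, and using continuity of $f_{x_0}$ (in place of lower quasicontinuity in $y$) to find $y_1\in L$ before running the same contradiction.
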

\begin{proof} By Remark 3.2-(i), there are $L\in \mathcal A$, with $L\subset V$,
and a nonempty
open set $O\subset U$ such that for every $y\in
L$ the set $\{x\in U:d(f(x,y),f(x,b))\leq\varepsilon/2\}$ is dense in $U$. It remains to follow   the  proof of Proposition
3.3-(ii) (more simply, because here the $f_x$'s are  continuous). 
\end{proof}
The following is a variant of Theorem 3.4-(ii) (the assumption on the $x$-sections of $f$ 
is strengthened, but there are fewer constraints on $Y$ and the conclusion is stronger).
\begin{theo} Suppose that for each point $y$ in a dense subset of $Y$,
 Player I has a winning strategy in the game $G({\mathcal A},y)$. If 
$X$ is a Baire space,
$f_x$  is continuous for every $x\in X$ and $f^y$
is quasicontinuous for every $y\in Y$, then $f$ is quasicontinuous.
\end{theo}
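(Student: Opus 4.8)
The plan is to verify quasicontinuity of $f$ at an arbitrary point $(a,b_0)\in X\times Y$ straight from the definition. So I fix $\varepsilon>0$ and a basic neighborhood $U_0\times V_0$ of $(a,b_0)$, and I search for a nonempty open box $O\times W\subset U_0\times V_0$ on which $f$ stays within $\varepsilon$ of $f(a,b_0)$. The whole argument is organized so as to reduce to Proposition 3.6, which estimates $f(x,y)$ by $f(x,b)$ as soon as $b$ is a point where Player I wins $G(\mathcal{A},b)$; the remaining work is to locate a good auxiliary point $b$ near $b_0$ and to control the section $f^b$ near $a$.

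First I would use the continuity of the $x$-section $f_a=f(a,\cdot)$ at $b_0$ to choose an open $V_1$ with $b_0\in V_1\subset V_0$ and $d(f(a,b_0),f(a,y))<\varepsilon/3$ for every $y\in V_1$. Since the set of points where Player I has a winning strategy in $G(\mathcal{A},\cdot)$ is dense, I can then pick such a point $b$ inside $V_1$; in particular $d(f(a,b_0),f(a,b))<\varepsilon/3$. This is the step that simultaneously exploits density (to make Proposition 3.6 applicable at $b$) and continuity of $f_a$ (to keep $f(a,b)$ close to $f(a,b_0)$).

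The crucial maneuver is to apply the quasicontinuity of the $y$-section $f^b=f(\cdot,b)$ at $a$ \emph{before} invoking Proposition 3.6. Quasicontinuity at $a$ yields a nonempty open set $O'\subset U_0$ on which $d(f(x,b),f(a,b))<\varepsilon/3$ for every $x\in O'$. Only now would I feed $O'$ into Proposition 3.6, applied with the point $b$, the neighborhood $V_1$ of $b$, and $U=O'$; this produces a nonempty open box $O\times W\subset O'\times V_1\subset U_0\times V_0$ with $d(f(x,y),f(x,b))\leq\varepsilon/3$ throughout. A three-term triangle inequality through $f(x,b)$ and $f(a,b)$ then bounds $d(f(x,y),f(a,b_0))$ by $\varepsilon/3+\varepsilon/3+\varepsilon/3=\varepsilon$ on $O\times W$ (strict, since two of the three summands are strict), which is precisely quasicontinuity of $f$ at $(a,b_0)$.

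The one thing that must not be gotten wrong is the order of the last two steps. If one first applied Proposition 3.6 to obtain a box $O\times W$ and only afterwards tried to control $f^b$ by quasicontinuity, the open set delivered by quasicontinuity could lie outside $O$, and the estimate on $d(f(x,b),f(a,b))$ would fail to hold on the box one actually ends up with. Applying quasicontinuity first, and using its output $O'$ as the input open set to Proposition 3.6, forces $O\subset O'$ and keeps all three estimates valid at once on the final box; this interleaving is the genuine content of the proof, the rest being routine bookkeeping with $\varepsilon/3$.
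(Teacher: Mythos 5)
Your proposal is correct and follows essentially the same route as the paper's proof: shrink the neighborhood of $b_0$ using continuity of $f_a$, pick a point $b$ (the paper's $c$) where Player I wins $G({\mathcal A},b)$, apply quasicontinuity of $f^b$ at $a$ to get an open set, feed that set into Proposition 3.6, and conclude by the three-term triangle inequality. The ordering issue you highlight is indeed respected in the paper's argument (its $O_2\times W$ is taken inside $O_1\times V$), so the two proofs coincide up to the cosmetic choice of $\varepsilon/3$ versus a final bound of $3\varepsilon$.
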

\begin{proof} Let $(a,b)\in X\times Y$, $U\times V$  be a neighborhood of
$(a,b)\in X\times Y$
and $\varepsilon>0$. We may suppose that $d(f(a,y),f(a,b))<\varepsilon$ for
every $y\in V$.
Let $c\in V$ be such that  Player I has a winning strategy in the game
$G({\mathcal A},c)$. Since $f^c$ is quasicontinuous,
there is a nonempty open set $O_1\subset U$ such that
$d(f(x,c),f(a,c))<\varepsilon$ for every $x\in O_1$. Let $O_2\times W\subset
O_1\times V$ be a nonempty open set such that $d(f(x,y),f(x,c))<\varepsilon$
for every
$(x,y)\in O_2\times W$ (Proposition 3.6). For every $(x,y)\in O_2\times W$, we have 
\begin{align*}
d(f(x,y),f(a,b))\leq &\ 
d(f(x,y),f(x,c))+d(f(x,c),f(a,c))\\
&\ +d(f(a,c),f(a,b))\\
\leq &\ 3\varepsilon.
\end{align*} 
\end{proof}
The following is a consequence of Theorem 3.7; it can be also obtained (directly
and more simply) by using Remark 3.2-(ii).
\begin{cor} Suppose  that for
every $y$
in a dense subset of $Y$, the collection $\mathcal A$
includes a countable network at $y$ in $Y$. If  $X$ is a Baire space, $f^y$
is quasicontinuous 
 for each $y\in Y$ and $f_x$ is continuous for each $x\in X$,
then
$f$ is quasicontinuous.
\end{cor}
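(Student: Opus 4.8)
The plan is to derive Corollary 3.8 directly from Theorem 3.7 by verifying that its hypothesis is met. The only gap between the two statements is the assumption on the factor $Y$: Theorem 3.7 requires that Player I have a winning strategy in the game $G(\mathcal{A},y)$ for each $y$ in a dense subset of $Y$, whereas Corollary 3.8 instead assumes that $\mathcal{A}$ includes a countable network at each such $y$. So the work reduces to a single implication about the games on $Y$, after which Theorem 3.7 applies verbatim.

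First I would fix a point $y$ belonging to the given dense subset of $Y$, and let $\mathcal{N}=\{N_k:k\in\mathbb{N}\}\subset\mathcal{A}$ be a countable network at $y$. I would then describe an explicit winning strategy $\tau_y$ for Player I in $G(\mathcal{A},y)$, using the enumeration of $\mathcal{N}$ to diagonalize against Player II's choices. The natural move is to have Player I feed the members of $\mathcal{N}$ back to Player II in turn (interleaved in the standard bookkeeping fashion, so each $N_k$ is played infinitely often, or once at a controlled stage), forcing Player II to select points $y_k\in N_k$. Since each $N_k\in\mathcal{A}$ is somewhere dense, these are legal moves for Player I, so the strategy is well defined.

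To check that $\tau_y$ is winning, I would take an arbitrary neighborhood $U$ of $y$ and use the network property: there is some nonempty $N_k\in\mathcal{N}$ with $N_k\subset U$. At the stage where Player I plays $N_k$, Player II's response $y_k$ lies in $N_k\subset U$, so every neighborhood of $y$ eventually contains a picked point, giving $y\in\overline{\{y_n:n\in\mathbb{N}\}}$ and hence a win for Player I. Once this implication is in place, the set of points where Player I wins $G(\mathcal{A},y)$ contains the prescribed dense set, so the hypothesis of Theorem 3.7 holds and the conclusion that $f$ is quasicontinuous follows immediately.

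I do not expect a genuine obstacle here; the statement is essentially a packaging of Theorem 3.7, and the remark in the excerpt already flags that the result can alternatively be obtained \emph{directly and more simply} from Remark 3.2-(ii). The only point requiring mild care is the bookkeeping in the definition of $\tau_y$: a formal strategy is a function on $Y^{<\omega}$, so I would index Player I's moves by the finite sequence of prior responses and use a fixed surjection $\mathbb{N}\to\mathbb{N}$ to guarantee that every $N_k$ is offered to Player II at some stage regardless of the play, exactly as in the enumeration device used in the proof of Proposition 2.1. This ensures $\tau_y$ is a legitimate strategy rather than merely a schematic recipe.
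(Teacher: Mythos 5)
Your proposal is correct and matches the paper's intended route: the paper presents Corollary 3.8 as a consequence of Theorem 3.7, and the only missing step is exactly the one you supply, namely that a countable network $\{N_k\}\subset\mathcal{A}$ at $y$ yields a (predetermined) winning strategy for Player I in $G(\mathcal{A},y)$ by playing the $N_k$ in turn, since any neighborhood of $y$ contains some nonempty $N_k$ and hence the point Player II picked from it. The paper also notes an alternative direct proof via Remark 3.2-(ii), but your derivation through Theorem 3.7 is the stated one and is sound.
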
 
Corollary 3.8 shows that the question of  Talagrand \cite{T} mentioned in the
introduction has a positive answer if one of the two spaces $X$ and $Y$ has 
a dense subset of points of   countable $\pi$-character (since  $Y$
is compact, the product
$X\times Y$ is
Baire hence the quasicontinuous mapping $f: X\times Y\to Z$ has at least a continuity point). Related to this, let us recall from the theorem  of
\v Sapirovski\v\i\,
\cite{Sh1} that all compacts spaces $Y$ that cannot be continuously mapped onto the
Tychonoff
 cube $[0,1]^{\omega_1}$ satisfy the conditions of Corollary 3.8. This is also the case
 of all hereditarily normal compact spaces, by another result of \v Sapirovski\v\i\, \cite{Sh2,Sh3}.
 Taking into account 
 the theorem of  Juh\'asz and Shelah \cite{JS} that
  $\pi_\chi(y,Y)\leq t(y,Y)$ for every $y$ in the compact space $Y$, the
answer to Talagrand's question is  also positive if  $Y$ has a dense set of points  
of countable tightness. The tightness
$t(y,Y)$ of $y$ in $Y$  is the smallest
cardinal $\kappa$ such that whenever $y\in\overline A$, $A\subset Y$,
there is a set $B\subset A$ with $|B|\leq \kappa$ such that $y\in\overline
B$.\par 
\begin{example} {\rm In conclusion, we return to the question raised in the
introduction whether it
is 
possible to assume
in the main results that the $x$-sections  of the mappings $f: X\times Y\to
Z$
are only cliquish, as  is the case     
if the factor $Y$ is (locally) second countable \cite{F}.  Unfortunately,  this is not possible  even for 
metrizable $Y$. To show this, let $Y$ be a metrizable space such that the interior of
every  separable subspace of $Y$ is empty and take  $X$ to be the countably compact (hence Baire) subspace
$X=\Sigma(0)$ of the Cantor
space $2^Y$. Then, the  evaluation mapping $X\times Y\ni (x,y)\to x(y)\in \{0,1\}$
 is cliquish in the variable $y$, 
continuous
in the variable $x$  but not cliquish
if  $Y$ is dense in itself. Furthermore, inverting the roles of
$X$
and $Y$, and taking $Y$ to be a Baire space (e.g., completely metrizable), we
obtain an
example showing that the assumption concerning the
$y$-sections
of the mapping $f$ in 
 Theorems 3.4 and 3.7 cannot be replaced by  
 the lower quasicontinuity  of $f$ with respect to the variable $x$.}
 \end{example}


\end{document}